\newtheorem{thm}{Theorem}[section]
\newtheorem{defn}[thm]{Definition}
\newtheorem{rem}[thm]{Remark}
\newtheorem{exm}[thm]{Example}
\date{~}
\begin{document}

\title{Semi Compactness in Multiset Topology}
\author{J. Mahanta*, D. Das$^\perp$ \\
Department of Mathematics\\
         NIT Silchar, Assam, 788 010, India.\\
         *mahanta.juthika@gmail.com, $^\perp$ deboritadas1988@gmail.com}

\maketitle

\begin{abstract}
In this paper, we introduce and study the concepts of semi open (SOM) and semi closed (SCM) M-sets in multiset topological spaces. With this generalization of the notions of open and closed sets in M-topology, we generalize the concept of compactness in M-topology as semi compactness. Further semi compactness is generalized as semi whole compactness, semi partial whole compactness and semi full compactness. Some characterizations of these compact spaces are studied in the setting of multiset theory. In each step, several remarks with proper justifications are provided taking the well existing theories of general topology as the base of our study.
\end{abstract}

\vspace{.3 cm}

\section{Introduction}

Classic set theory is a basic concept to represent various situations in Mathematical notation where repeated occurrences of elements are not allowed. But in various circumstances repetition of elements become mandatory to the system; eg., considering a graph with loops, in chemical bonding, molecules of a substance etc. Taking this fact into consideration, Blizard \cite{BW} first studied multiset in 1989. In the same sense, Dedekind \cite{DR} stated that each element in the range of a function can be thought of as having a multiplicity equal to the number of elements in the domain that are mapped to it. The theory of multisets have been studied by many other authors in different senses \cite{GK}, \cite{HM}, \cite{LA}, \cite{SD}, \cite{SDA}, \cite{SS} and \cite{WH}.\\

A wide application of M-sets can be found in various branches of Mathematics. Algebraic structures for multiset space have been constructed by Ibrahim {\it{et al}}. in \cite{IA}. In \cite{OF}, use of multisets in colorings of graphs have been discussed by F. Okamota {\it{et al}}.  Application of M-set theory in decision making can be seen in \cite{YY}. A. Syropoulos \cite{SA}, presented a categorical approach to multisets along with partially ordered multisets. V. Venkateswaran \cite{VV} found a large new class of multisets Wilf equivalent pairs which is claimed to be the most general multiset Wilf equivalence result to date. In 2012, Girish and John \cite{GJ} introduced multiset topologies induced by multiset relations. The same authors further studied the notions of open sets, closed sets, basis, sub basis, closure and interior, continuity and related properties in M-topological spaces in \cite{GKJ}. \\

In this paper, we introduce the concept of semi open and semi closed sets in multiset theory. In classical set theory, semi open and semi closed sets were first studied by N. Levine \cite{LN}. Since its introduction, semi closed and semi open sets have been studied by different authors [ \cite{DC}, \cite{DC2}, \cite{GM}, \cite{TT} etc].\\

This paper begins with the initiation of semi open M-sets (SOM-sets) and semi closed M-sets (SCM-sets) in M-topology. Then we focus on the study of various set theoretic properties of SOM-sets and SCM-sets. Further we introduce the concept of semi compactness in M-topological space along with certain characterizations.

\section{Preliminaries}
Below are some definitions and results as discussed in \cite{GJ}, which are required in the sequel.

\begin{defn} 
An M-set $M$ drawn from the set $X$ is represented by a function Count $M$ or $C_M : X \longrightarrow W$, where $W$ represents the set of whole numbers.\\

Here $C_M (x)$ is the number of occurrences of the element $x$ in the M-set $M$. We represent the M-set $M$ drawn from the set $X = \{x_1, . . . , x_n\}$ as $M= \{m_1/x_1, m_2/x_2, . . . , m_n/x_n\}$ where $m_i$ is the number of occurrences of the element $x_i, i=1, 2, . . ., n$ in the M-set $M$. Those elements which are not included in the M-set have zero count. 
\end{defn}
Note: Since the count of each element in an $M$-set is always a non-negative integer so we have taken $W$ as the range space instead of $N$.
\begin{exm}
Let $X = \{a, b, c\}$. Then $M = \{3/a, 5/b, 1/c\}$ represents an M-set drawn from $X$.
\end{exm}

Various operations on M-sets are defined as follows:

If $M$ and $N$ are two M-sets drawn from the set $X$, then
\begin{itemize} 
\item $M=N \Leftrightarrow C_M (x) = C_ N (x)~ \forall  x \in X$.
\item $M \subseteq N \Leftrightarrow C_M (x) \leq  C_ N (x) ~ \forall  x \in X$.
\item $P = M \cup N \Leftrightarrow C_P (x) = max \{C_ M (x), C_ N (x)\} ~ \forall x \in X$.
\item $P = M \cap N \Leftrightarrow C_P (x) = min \{C_ M (x), C_ N (x)\} ~ \forall x \in X$.
\item $P = M \oplus N \Leftrightarrow C_P (x) = C_ M (x) + C_ N (x) ~ \forall x \in X$.
\item $P = M \ominus N \Leftrightarrow C_P (x) = max \{C_ M (x)- C_ N (x), 0\} ~ \forall x \in X$, where $\oplus$ and $\ominus$ represents M-set addition and M-set subtraction respectively. 
\end{itemize}
\textbf{Operations under collections of M-sets:} Let $[X]^w$ be an M-space and $\{M_i ~|~i \in I \}$ be a collection of M-sets drawn from $[X]^w$. Then the following operations are defined
\begin{itemize}
\item $\underset{i \in I} {\bigcup} M_i = \{C_{M_i}(x)/x ~ |~ C_{M_i}(x) = max \{C_{M_i}(x) ~|~ x \in X\}\}$.
\item $\underset{i \in I} {\bigcap} M_i = \{C_{M_i}(x)/x ~ |~ C_{M_i}(x) = min \{C_{M_i}(x) ~|~ x \in X\}\}$.
\end{itemize}

\begin{defn}
The support set of an M-set $M$, denoted by $M^*$ is a subset of $X$ and is defined as $M^* = \{x \in X ~|~ C_M(x) > 0\}$. $M^*$ is also called root set.
\end{defn}

\begin{defn}
An M-set $M$ is called an empty M-set if $C_M(x) = 0, ~~ \forall  ~ x \in X$.
\end{defn}

\begin{defn}
A domain $X$, is defined as the set of elements from which M-set are constructed. The M-set space $[X]^w$ is the set of all M-sets whose elements are from X such that no element occurs more than w times.
\end{defn}

\begin{rem}
It is clear that the definition of the operation of M-set addition is not valid in the context of M-set space $[X]^w$, hence it was refined as\\
$C_{M_1 \oplus M_2}(x) = min \{w, C_{M_1}(x) + C_{M_2}(x)\}$ for all $x \in X$.
\end{rem}

 In multisets the number of occurrences of each element is allowed to be more than one which leads to generalization of the definition of subsets in classical set theory. So, in contrast to classical set theory, there are different types of subsets in multiset theory.

\begin{defn}
A subM-set N of M is said to be a whole subM-set if and only if $C_{N}(x)=C_{M}(x)$ for every $x \in N$.
\end{defn}

\begin{defn}
A subM-set N of M is said to be a partial whole subM-set if and only if $C_{N}(x)=C_{M}(x)$ for some $x \in N$.
\end{defn}

\begin{defn}
A subM-set N of M is said to be a full subM-set if and only if $C_{N}(x) \leq C_{M}(x)$ for every $x \in N$.
\end{defn}

As various subset relations exist in multiset theory, the concept of power M-set can also be generalized as follows:
\begin{defn} Let $M \in [X]^w$ be an M-set.
\begin{itemize}
\item The power M-set of M denoted by ${\mathcal{P}}(M)$ is defined as the set of all subM-sets of M.
\item The power whole M-set of M denoted by ${\mathcal{PW}}(M)$ is defined as the set of all whole subM-sets of M.
\item The power full M-set of M denoted by ${\mathcal{PF}}(M)$ is defined as the set of all full subM-sets of M.
\end{itemize}
\end{defn}
The power set of an M-set is the support set of the power M-set and is denoted by ${\mathcal{P}}^{*}(M)$.
\begin{defn}
Let $M \in [X]^w$ and $\tau \subseteq {\mathcal{P}}^{*}(M) $. Then $\tau$ is called an M-topology if it satisfies the following properties:
\begin{itemize}
\item The M-set M and the empty M-set $\phi$ are in $\tau$.
\item The M-set union of the elements of any subcollection of $\tau$ is in $\tau$.
\item The M-set intersection of the elements of any finite subcollection of $\tau$ is in $\tau$.
\end{itemize}
The elements of $\tau $ are called open M-set. A subM-set $N$ of a M-topological space $M$ is said to be closed if the M-set $M \ominus N$ is open.
\end{defn}

\begin{defn}
Given a subM-set A of an M-topological space M in $[X]^w$
\begin{itemize}
\item The interior of A is defined as the M-set union of all open M-sets contained in A and is denoted by $int(A)$ i.e.,
$C_{int(A)}(x)= C_{\cup G}(x)$ where G is an open M-set and $G \subseteq A$.
\item The closure of A is defined as the M-set intersection of all closed M-sets containing A and is denoted by $cl(A)$ i.e.,
$C_{cl(A)}(x)= C_{\cap K}(x)$ where G is a closed M-set and $A \subseteq K$.
\end{itemize}
\end{defn}

\begin{defn} If $M$ is an M-set, then the M-basis for an M-topology in $[X]^w$ is a collection $\mathcal{B}$ of subM-sets of $M$ such that 
\begin{itemize}
\item For each $x {\in}^m M$, for some $m>0$ there is at least one M-basis element $B \in \mathcal{B}$ containing $m/x$.
\item If $m/x$ belongs to the intersection of two M -basis elements $P$ and $Q$, then $\exists$ an M- basis element $R$ containing $m/x$ such that $R \subseteq P \cap Q$ with $C_R(x)= C_{P \cap Q}(x)$ and $C_R(y) \leq  C_{P \cap Q}(y)$ $\forall y \neq x$.
\end{itemize}
\end{defn}

\begin{defn}
Let $(M, \tau)$ be an M-topological space and N is a subM-set of M. The collection $\tau_N = \{N \cap U : U \in \tau\}$ is an M-topology on N, called the subspace M-topology. With this M-topology, N is called a subspace of M.
\end{defn}

\section{Semi Open M-sets and Semi Closed M-sets}

In this section, we introduce the notions of semi open and semi closed sets in multiset theory, viz., semi open M-set (SOM-set) and semi closed M-set (SCM-set). The topological structures of a space can be characterized by the notions of open and closed sets. With the generalization of open and closed sets in M-topology, we get a broader class of sets and hence can study the space to get some interesting results which will be more applicable in certain areas of study. 
\\Throughout the paper we shall employ the following definition for complement of an M-set:
\begin{defn}
The M-complement of a subM-set $N$ in an M-topological space $(M, \tau)$ is denoted and defined as $N^{c}= M \ominus N$.
\end{defn}

\begin{defn}
An M-set S in an M-topology $(M, \tau)$ is said to be semi open M-set(SOM-set) iff $\exists$ an open M-set O such that 
\begin{center}
 $C_O(x) \leq C_S(x) \leq C_{cl(O)}(x)$ for all $x \in X$.
\end{center}
\end{defn}

\begin{exm}
Let $X = \{a, b, c\}$ and $M = \{5/a, 2/b, 3/c\}$ be an M-set in  $[X]^5$. Then $\tau = \{M, \phi, \{5/a, 2/b\}, \{3/c\}, \{1/a, 2/b\}, \{1/a, 2/b, 3/c\}\}$ is an M-topology on M. \\
In this M-topology the possible SOM-sets are $\phi, M, \{5/a, 2/b\}, \{3/c\}, \{1/a, 2/b\},\\ \{1/a, 2/b, 3/c\}, \{2/a, 2/b\}, \{3/a, 2/b\}, \{4/a, 2/b\}, \{2/a, 2/b, 3/c\},\{3/a, 2/b, 3/c\}, \\ \{4/a, 2/b, 3/c\}$.
\end{exm}

\begin{defn}
An M-set T in an M-topology $(M, \tau)$ is said to be semi closed M-set(SCM-set) iff $\exists$ a closed M-set P such that 
\begin{center}
 $C_{int(P)}(x) \leq C_T(x) \leq C_P(x)$ for all $x \in X$.
\end{center}
The M-complement of SOM-sets are SCM-sets
\end{defn}

\begin{defn}
Let $(M,\tau)$ be an M-topology. Then 
\begin{itemize}
\item the semi closure of an M-set $A$ is denoted by $scl(A)$ and defined as $C_{scl(A)}(x)= C_{\cap  T}(x)$ where $T$ is a SCM-set containing $A$.
\item the semi interior of an M-set $A$ is denoted by $sint(A)$ and defined as $C_{sint(A)}(x)= C_{\cup  S}(x)$ where $S$ is a SOM-set contained in $A$.
\end{itemize}
\end{defn}

\begin{thm}
Let $ \{S_{\lambda} : \lambda \in \Lambda \}$ be a collection of SOM-sets in an M-topology $(M, \tau)$. Then arbitrary union of $S_{\lambda}$ is also SOM-set.
\end{thm}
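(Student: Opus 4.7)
The plan is to use the defining condition of semi openness directly. For each $\lambda \in \Lambda$, pick by hypothesis an open M-set $O_\lambda$ witnessing semi openness of $S_\lambda$, i.e.
\[
C_{O_\lambda}(x) \leq C_{S_\lambda}(x) \leq C_{cl(O_\lambda)}(x) \qquad \forall x \in X.
\]
The natural candidate for a witness of semi openness of $\bigcup_\lambda S_\lambda$ is $O := \bigcup_\lambda O_\lambda$, which is open by the second axiom of an M-topology.

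The two inequalities then follow by taking maxima over $\lambda$. For the lower bound, using the definition of M-set union in terms of count functions,
\[
C_O(x) = \max_\lambda C_{O_\lambda}(x) \leq \max_\lambda C_{S_\lambda}(x) = C_{\bigcup_\lambda S_\lambda}(x).
\]
For the upper bound, I first need the monotonicity of closure, namely that $O_\lambda \subseteq O$ implies $cl(O_\lambda) \subseteq cl(O)$; this is immediate from the definition of $cl$ as the intersection of all closed M-sets containing the given set, since every closed M-set containing $O$ also contains $O_\lambda$. With this in hand,
\[
C_{\bigcup_\lambda S_\lambda}(x) = \max_\lambda C_{S_\lambda}(x) \leq \max_\lambda C_{cl(O_\lambda)}(x) \leq C_{cl(O)}(x).
\]

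The main (and only) conceptual point is the monotonicity of the closure operator in the multiset setting; everything else is just rewriting union and subset conditions in terms of count functions and applying $\max$ pointwise. Once the two displayed chains of inequalities are in place, the open M-set $O$ witnesses that $\bigcup_\lambda S_\lambda$ satisfies the definition of a SOM-set, completing the proof.
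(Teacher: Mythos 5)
Your proposal is correct and follows essentially the same route as the paper: use $\bigcup_\lambda O_\lambda$ as the open witness and compare counts pointwise via maxima. In fact you make explicit the one step the paper leaves implicit, namely that $\bigcup_\lambda cl(O_\lambda) \subseteq cl\bigl(\bigcup_\lambda O_\lambda\bigr)$ by monotonicity of the closure operator, so your write-up is, if anything, slightly more complete.
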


\begin{proof}
Since each $S_ \lambda$ is SOM-set so $\exists$ an open M-set $O_ \lambda$ for each $\lambda$ such that \\
 $C_{O_{\lambda}}(x) \leq C_{S_{\lambda}}(x) \leq C_{cl(O_{\lambda})}(x)$ for all $x \in X$ and $\lambda \in \Lambda $.
 Now, taking arbitrary union over $\lambda$,\\
 \begin{center} 
 $C_{\underset{\lambda}{\cup}O_{\lambda}}(x) \leq C_{\underset{\lambda}{\cup}S_{\lambda}}(x) \leq C_{\underset{\lambda}{\cup}cl(O_{\lambda})}(x)$ for all $x \in X$. \end{center}
 This imply $\underset{\lambda}{\cup}S_{\lambda}$ is a SOM-set, because $\underset{\lambda}{\cup}O_{\lambda}$ is an open M-set being the arbitrary union of open sets.
\end{proof}

\begin{rem}
Intersection of two SOM-sets may not be a SOM-set.
\end{rem}

\begin{rem}
The collection of all SOM-sets doesn't form an M-topology since intersection of two SOM-sets may not be a SOM-set. However, this collection forms an M-topology iff the closure of all open M-sets are open or equivalently disjoint open sets have disjoint closures in $(M, \tau)$.
\end{rem}

\begin{thm}
The union of a SOM-set with an open M-set is again a SOM-set.
\end{thm}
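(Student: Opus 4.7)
The plan is to produce an explicit open witness for $S \cup U$ being semi-open, starting from the witness that exists for $S$. Let $S$ be a SOM-set and $U$ an open M-set, and let $O$ be an open M-set satisfying
\[
C_O(x) \leq C_S(x) \leq C_{cl(O)}(x) \quad \text{for all } x \in X,
\]
which exists by definition of SOM-set. My candidate witness for $S \cup U$ will be $O \cup U$, which is open since open M-sets are closed under arbitrary M-set union (by the axioms of M-topology).

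The verification splits into two count inequalities, both handled via the fact that $\cup$ is the pointwise $\max$. For the lower bound, I would observe that $C_{O \cup U}(x) = \max\{C_O(x), C_U(x)\} \leq \max\{C_S(x), C_U(x)\} = C_{S \cup U}(x)$, using $C_O(x) \leq C_S(x)$. For the upper bound, the key observation is that both $cl(O)$ and $U$ sit inside $cl(O \cup U)$: the first by monotonicity of closure applied to $O \subseteq O \cup U$, and the second because $U \subseteq O \cup U \subseteq cl(O \cup U)$. Hence $C_{S \cup U}(x) = \max\{C_S(x), C_U(x)\} \leq \max\{C_{cl(O)}(x), C_U(x)\} \leq C_{cl(O \cup U)}(x)$.

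Chaining these gives $C_{O \cup U}(x) \leq C_{S \cup U}(x) \leq C_{cl(O \cup U)}(x)$ for all $x$, which is precisely the semi-open condition with witness $O \cup U$, so $S \cup U$ is a SOM-set. There is no serious obstacle here; the only step that might warrant explicit justification is the monotonicity of $cl$, i.e.\ that $A \subseteq B$ implies $C_{cl(A)}(x) \leq C_{cl(B)}(x)$, which follows immediately from the definition of $cl$ as the intersection of all closed M-sets containing the given M-set (any closed M-set containing $B$ also contains $A$, so the intersection defining $cl(A)$ is over a larger collection).
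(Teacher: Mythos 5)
Your proof is correct and follows essentially the same route as the paper: both take the open witness $A$ (your $O$) for $S$ and show that $A \cup U$ witnesses semi-openness of $S \cup U$ via the same two count inequalities. The only cosmetic difference is that the paper justifies the upper bound through $C_{cl(A)\cup U}(x) \leq C_{cl(A)\cup cl(U)}(x) = C_{cl(A\cup U)}(x)$, whereas you use monotonicity of the closure directly, which is an equally valid (indeed slightly more economical) justification of the same containment.
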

\begin{proof}
Let $O$ be an open M-set and $S$ be a SOM-set. So $\exists$ an open M-set $A$ such that $C_A(x) \leq C_S(x) \leq C_{cl(A)}(x) \Rightarrow C_{A \cup O}(x) \leq C_{S \cup O}(x) \leq C_{cl(A) \cup O}(x)$. Now, we have $C_{cl(A) \cup O} \leq C_{cl(A) \cup cl(O)}$ = $C_{cl(A \cup O)}$. Hence, $ C_{A \cup O}(x) \leq C_{S \cup O}(x) \leq C_{cl(A \cup O)}(x) \Rightarrow S \cup O$ is a SOM-set.
\end{proof}

\begin{thm}
Let $ \{T_{\lambda} : \lambda \in \Lambda \}$ be a collection of SCM-sets in an M-topology $(M, \tau)$. Then arbitrary intersection of $T_{\lambda}$ is also SCM-set.
\end{thm}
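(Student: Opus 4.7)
The plan is to reduce the statement to the previously established theorem on arbitrary unions of SOM-sets by passing to M-complements. Concretely, for each $\lambda \in \Lambda$, the complement $T_\lambda^{c} = M \ominus T_\lambda$ is an SOM-set, by the remark following the definition of SCM-sets. The previous theorem then gives that $\underset{\lambda}{\bigcup} T_\lambda^{c}$ is an SOM-set, and taking complements again should recover $\underset{\lambda}{\bigcap} T_\lambda$ as an SCM-set. Thus the first step is to verify the De Morgan-type identity $\bigl(\underset{\lambda}{\bigcap} T_\lambda\bigr)^{c} = \underset{\lambda}{\bigcup} T_\lambda^{c}$ at the count-function level, using $C_{M\ominus N}(x)=\max\{C_M(x)-C_N(x),0\}$ together with the $\min$/$\max$ identity $C_M(x)-\min_\lambda C_{T_\lambda}(x) = \max_\lambda\bigl(C_M(x)-C_{T_\lambda}(x)\bigr)$ when $C_{T_\lambda}(x)\le C_M(x)$.

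Alternatively, and perhaps more transparently, I would argue directly. For each $\lambda$, pick a closed M-set $P_\lambda$ with
\[
C_{int(P_\lambda)}(x) \leq C_{T_\lambda}(x) \leq C_{P_\lambda}(x) \quad \text{for all } x \in X.
\]
Taking the M-intersection over $\lambda$ (which preserves the inequalities because $\min$ is monotone in each argument), one obtains
\[
C_{\underset{\lambda}{\cap} int(P_\lambda)}(x) \leq C_{\underset{\lambda}{\cap} T_\lambda}(x) \leq C_{\underset{\lambda}{\cap} P_\lambda}(x).
\]
Set $P := \underset{\lambda}{\bigcap} P_\lambda$; this is closed since the collection of closed M-sets is closed under arbitrary M-intersections (dual to the open-set axioms in the definition of M-topology).

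The remaining step, and the only one requiring any care, is to replace $\underset{\lambda}{\cap} int(P_\lambda)$ by $int(P)$ in the left-hand inequality, i.e.\ to show $C_{int(P)}(x) \le C_{\underset{\lambda}{\cap} int(P_\lambda)}(x)$. This is the monotonicity of the interior operator in the multiset setting: since $P \subseteq P_\lambda$ for every $\lambda$ and $int$ is defined as the M-union of all open M-subsets, any open $G \subseteq P$ is also an open M-subset of each $P_\lambda$, hence $int(P) \subseteq int(P_\lambda)$ for every $\lambda$, and therefore $int(P) \subseteq \underset{\lambda}{\bigcap} int(P_\lambda)$. Combining this with the displayed inequality yields $C_{int(P)}(x) \le C_{\underset{\lambda}{\cap} T_\lambda}(x) \le C_{P}(x)$ with $P$ closed, so $\underset{\lambda}{\bigcap} T_\lambda$ is an SCM-set. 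The main (mild) obstacle is precisely this monotonicity observation; once it is in hand, the proof is a straightforward mirror image of the SOM-set theorem.
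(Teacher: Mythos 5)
Your proof is correct. The paper actually states this theorem without proof (it is the dual of Theorem 3.5 on arbitrary unions of SOM-sets, whose proof the paper does give), and your direct argument is exactly the intended mirror image: choose closed $P_\lambda$ with $C_{int(P_\lambda)}(x) \leq C_{T_\lambda}(x) \leq C_{P_\lambda}(x)$, intersect, and use that $P=\underset{\lambda}{\bigcap}P_\lambda$ is closed. You even make explicit the one step the paper glosses over in the dual situation: there the authors silently use $\underset{\lambda}{\bigcup} cl(O_\lambda) \subseteq cl\bigl(\underset{\lambda}{\bigcup} O_\lambda\bigr)$, and here the corresponding fact is your monotonicity observation $int(P) \subseteq \underset{\lambda}{\bigcap} int(P_\lambda)$, which you justify correctly from the definition of interior as the M-union of open subM-sets. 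Your alternative route via complements also works, since the paper asserts that M-complements of SOM-sets are SCM-sets (and Theorem 3.11 gives the equivalence); the De Morgan identity $\bigl(\underset{\lambda}{\bigcap} T_\lambda\bigr)^{c} = \underset{\lambda}{\bigcup} T_\lambda^{c}$ at the count level does require $C_{T_\lambda}(x) \leq C_M(x)$, but that holds automatically because the $T_\lambda$ are subM-sets of $M$, so there is no gap.
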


\begin{rem}
Union of two SCM-sets may not be a SCM-set.
\end{rem}

\begin{thm}
If a SOM-set S is such that  $C_S(x) \leq C_N(x) \leq C_{cl(S)}(x)$ for all $x \in X$, then the M-set N is also a SOM-set.
\end{thm}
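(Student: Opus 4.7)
The plan is to produce a witness open M-set for $N$ by simply reusing the one that witnesses $S$ being a SOM-set. Since $S$ is a SOM-set, the definition gives an open M-set $O$ with $C_O(x)\leq C_S(x)\leq C_{cl(O)}(x)$ for all $x\in X$, and I will verify that this same $O$ sandwiches $N$ as well.

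The lower inequality is immediate: chaining $C_O(x)\leq C_S(x)$ with the hypothesis $C_S(x)\leq C_N(x)$ yields $C_O(x)\leq C_N(x)$. For the upper inequality $C_N(x)\leq C_{cl(O)}(x)$, I would use the hypothesis $C_N(x)\leq C_{cl(S)}(x)$ and reduce the problem to the monotonicity fact $C_{cl(S)}(x)\leq C_{cl(O)}(x)$.

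To justify this monotonicity, observe that $cl(O)$ is a closed M-set and, by the SOM-property of $S$, it contains $S$ (i.e.\ $C_S(x)\leq C_{cl(O)}(x)$). Since $cl(S)$ is defined as the M-set intersection of all closed M-sets containing $S$, and $cl(O)$ is one such closed M-set, the count of every element in $cl(S)$ cannot exceed its count in $cl(O)$; hence $C_{cl(S)}(x)\leq C_{cl(O)}(x)$ for every $x\in X$. Combining the three inequalities gives $C_O(x)\leq C_N(x)\leq C_{cl(O)}(x)$, so $N$ is a SOM-set by definition.

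The only step that requires any thought is the monotonicity of the closure operator in the multiset setting, but it follows directly from the intersection-based definition of $cl(\cdot)$ given earlier in the preliminaries, so I do not anticipate any real obstacle.
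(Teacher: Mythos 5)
Your proposal is correct and follows essentially the same route as the paper: reuse the open M-set $O$ witnessing that $S$ is a SOM-set, chain $C_O(x)\leq C_S(x)\leq C_N(x)$, and use $C_{cl(S)}(x)\leq C_{cl(O)}(x)$ to obtain $C_N(x)\leq C_{cl(O)}(x)$. In fact you justify the closure-monotonicity step (via $cl(O)$ being a closed M-set containing $S$ and $cl(S)$ being the intersection of all such) more explicitly than the paper, which simply asserts it.
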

\begin{proof}
As S is a SOM-set $\exists$ an open M-set O such that  $C_O(x) \leq C_S(x) \leq C_{cl(O)}(x)$ for all $x \in X$. Then by hypothesis  $C_O(x) \leq C_N(x)$ for all $x \in X$ and  $C_{cl(S)}(x) \leq C_{cl(O)}(x) $ for all $x \in X$. This implies $C_N(x) \leq C_{cl(S)}(x) \leq C_{cl(O)}(x) \Rightarrow C_O(x) \leq C_N(x) \leq C_{cl(O)}(x)$. Hence, N is a SOM-set.
\end{proof}

\begin{thm}
If a SCM-set T is such that  $C_{int(T)}(x) \leq C_R(x) \leq C_T(x)$ for all $x \in X$, then the M-set R is also a SCM-set.
\end{thm}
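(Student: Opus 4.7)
My plan is to mirror the proof of the preceding SOM theorem, but dualized via the interior operator. Since $T$ is an SCM-set, the definition hands me a closed M-set $P$ with $C_{int(P)}(x) \leq C_T(x) \leq C_P(x)$ for all $x \in X$. I intend to show that the very same $P$ witnesses $R$ as an SCM-set, i.e.\ that $C_{int(P)}(x) \leq C_R(x) \leq C_P(x)$ for all $x \in X$.

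The upper inequality is immediate: by hypothesis $C_R(x) \leq C_T(x)$, and from the SCM characterization of $T$ we have $C_T(x) \leq C_P(x)$, so chaining gives $C_R(x) \leq C_P(x)$. For the lower inequality I need $C_{int(P)}(x) \leq C_R(x)$, and the hypothesis supplies $C_{int(T)}(x) \leq C_R(x)$, so it suffices to establish $C_{int(P)}(x) \leq C_{int(T)}(x)$. This is where I use the relation $int(P) \subseteq T$ (which is exactly the left-hand side of the SCM witness for $T$): applying monotonicity of the interior operator to this containment gives $int(int(P)) \subseteq int(T)$, and since $int(P)$ is already open we have $int(int(P)) = int(P)$, yielding $C_{int(P)}(x) \leq C_{int(T)}(x)$ as required.

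Putting the two chains together, $C_{int(P)}(x) \leq C_{int(T)}(x) \leq C_R(x) \leq C_T(x) \leq C_P(x)$, which is precisely the condition that $R$ is semi closed via the closed M-set $P$.

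The main thing to be careful about is not the logic of the chain but the two small topological facts used for the lower bound: monotonicity of $int$ on M-sets (if $A \subseteq B$ then $int(A) \subseteq int(B)$, which follows directly from the definition of $int$ as the union of all open subM-sets) and idempotence ($int(int(P)) = int(P)$, since $int(P)$ is itself open). Both are standard consequences of the M-topology axioms already recalled in the preliminaries, so no genuine obstacle arises; the structure of the argument is the exact dual of the SOM-case just proved.
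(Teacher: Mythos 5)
Your proof is correct and is exactly the intended argument: the paper states this theorem without proof, leaving it as the dual of the preceding SOM-set theorem, and your chain $C_{int(P)}(x)\le C_{int(T)}(x)\le C_R(x)\le C_T(x)\le C_P(x)$, justified by monotonicity and idempotence of the interior operator, mirrors the paper's closure-based proof of that result step for step. No gaps.
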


\begin{thm}
The following conditions are equivalent:
\begin{enumerate}[(i)]
\item S is SOM-set;
\item $C_S(x) \leq C_{cl(int(S))}(x)$;
\item $C_{int(cl(S^c))}(x) \leq C_{S^c}(x)$;
\item $S^c$ is SCM-set.
\end{enumerate}
\end{thm}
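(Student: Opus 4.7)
My plan is to split the four-way equivalence into three biconditionals: (i) $\Leftrightarrow$ (ii) by unpacking the SOM-set definition, (iii) $\Leftrightarrow$ (iv) as the dual statement for SCM-sets, and (ii) $\Leftrightarrow$ (iii) by applying the interior/closure--complement dualities to both sides of the inequality.

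For (i) $\Rightarrow$ (ii), I would extract the witnessing open M-set: by hypothesis there exists open $O$ with $C_O(x) \leq C_S(x) \leq C_{cl(O)}(x)$. Because $O$ is open and $O \subseteq S$, it lies in the collection whose union defines $int(S)$, so $C_O(x) \leq C_{int(S)}(x)$; closure is inclusion-preserving, hence $C_{cl(O)}(x) \leq C_{cl(int(S))}(x)$, and combining with $C_S(x) \leq C_{cl(O)}(x)$ delivers (ii). For (ii) $\Rightarrow$ (i) I would simply take $O := int(S)$, which is open by construction: then $C_O = C_{int(S)} \leq C_S$ and (ii) reads $C_S \leq C_{cl(int(S))} = C_{cl(O)}$, witnessing that $S$ is SOM. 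The same scheme with \emph{closed} in place of \emph{open} and $P := cl(S^c)$ as the witness, together with the facts $cl(S^c) \supseteq S^c$ and $int$ inclusion-preserving, handles (iii) $\Leftrightarrow$ (iv).

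The link between the two pairs is (ii) $\Leftrightarrow$ (iii), which rests on the multiset De Morgan identities
\begin{center}
$cl(A^c) = (int(A))^c$ \quad and \quad $int(A^c) = (cl(A))^c$
\end{center}
for any subM-set $A$ of $M$. I would verify these count-wise from the definitions of $\cup$, $\cap$ and $\ominus$, using $C_{M \ominus A}(x) = C_M(x) - C_A(x)$ (valid since $A \subseteq M$) together with the identity $C_M(x) - \max_i C_{A_i}(x) = \min_i (C_M(x) - C_{A_i}(x))$ to swap the union defining $int$ with the intersection defining $cl$, and the bijection $O \mapsto O^c$ between open subM-sets contained in $A$ and closed subM-sets containing $A^c$. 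Once the dualities are in hand, applying $(\cdot)^c$ to (ii) reverses the inequality (counts bounded by $C_M$ behave like ordinary integers here) and rewrites $(cl(int(S)))^c$ as $int((int(S))^c) = int(cl(S^c))$, producing (iii); the reverse implication is the same computation read backwards.

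The principal obstacle is the De Morgan step: one must handle $\ominus$ with some care, since its general formula $\max\{C_M - C_A, 0\}$ only reduces to ordinary subtraction on subM-sets of $M$, and one must confirm that complementation within $(M,\tau)$ is an involution so that ``closed M-sets containing $A^c$'' coincides precisely with ``complements of open M-sets contained in $A$''. With these verifications in place, each implication reduces to a short chain of count-wise inequalities and the rest of the proof is routine.
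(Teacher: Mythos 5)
Your proof is correct, and its substance (witness extraction, monotonicity of $int$ and $cl$, complement duality) is the same as the paper's, but the route is organized differently. The paper proves a single cycle $(i)\Rightarrow(ii)\Rightarrow(iii)\Rightarrow(iv)\Rightarrow(i)$: the step $(iv)\Rightarrow(i)$ is done by complementing the closed witness $B$ of the SCM condition directly, so the paper never needs your $(ii)\Rightarrow(i)$ argument with $O:=int(S)$, nor your $(iv)\Rightarrow(iii)$ extraction; and its step $(ii)\Rightarrow(iii)$ is dispatched with the single phrase ``taking complement of $(ii)$''. You instead establish three independent biconditionals, $(i)\Leftrightarrow(ii)$, $(iii)\Leftrightarrow(iv)$, and $(ii)\Leftrightarrow(iii)$, and you isolate as an explicit lemma the multiset De Morgan identities $cl(A^c)=(int(A))^c$ and $int(A^c)=(cl(A))^c$, verifying them count-wise via $C_{M\ominus A}(x)=C_M(x)-C_A(x)$ for $A\subseteq M$, the max/min exchange, and the bijection $O\mapsto O^c$ between open subM-sets of $A$ and closed subM-sets containing $A^c$. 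What the paper's cycle buys is brevity (four one-line implications); what your version buys is rigor precisely where the multiset setting is delicate --- the paper's ``taking complement'' silently assumes exactly the duality and involution facts you spell out, and your characterizations $int(S)$ open with $C_{int(S)}\leq C_S$ and $cl(S^c)$ closed with $C_{S^c}\leq C_{cl(S^c)}$ make each equivalence reusable on its own. Both arguments are sound; yours is the more self-contained.
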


\begin{proof}
$(i) \Rightarrow (ii)$ Let $S$ be a SOM-set. So $\exists$ an open M-set $A$ such that  $C_A(x) \leq C_S(x) \leq C_{cl(A)}(x)$ for all $x \in X$.
 Since $A$ is an open M-set $C_S(x) \leq C_{cl(intA)}(x)$. Now  $C_A(x) \leq C_S(x) \Rightarrow  C_{(cl(intA))}(x) \leq C_{(cl(int(S)))}(x) $. Thus we have $C_S(x) \leq C_{cl(int(S))}(x)$.\\
 $(ii) \Rightarrow (iii)$ Taking complement of $(ii)$, $C_{int(cl(S^c))}(x) \leq C_{S^c}(x)$. \\
 $(iii) \Rightarrow (iv)$ $cl(S^c)$ is a closed M-set such that $C_{int(cl(S^c))}(x) \leq C_{S^c}(x) \leq C_{cl(S^c)}(x)$. So $S^c$ is a SCM-set.\\
 $(iv) \Rightarrow (i)$ Since $S^c$ is a SCM-set, $\exists$ a closed M-set $B$ such that $C_{int(B)}(x) \leq C_{S^c}(x) \leq C_{B}(x) \Rightarrow C_{cl(B^c)}(x) \geq C_{S}(x) \geq C_{B^c}(x) \Rightarrow S$ is a SOM-set. 
\end{proof}

On the basis of above discussion we list the conditions for an M-set $A$ to be SOM-set as follows:
\begin{itemize}
\item If $A$ is open;
\item If $A$ is clopen;
\item If $A$ is the closure of some open M-set;
\item If $A$ is the interior of some M-set;
\item If $C_A(x) \leq C_{cl(intA)}(x)$ $\forall x \in X$;
\item If $C_S(x) \leq C_A(x) \leq C_{cl(S)}(x)$ for all $x \in X$, for some SOM-set $S$.
\end{itemize}

Similarly, the conditions for an M-set $B$ to be SCM-set are as follows:
\begin{itemize}
\item If $B$ is closed;
\item If $B$ is clopen;
\item If $B$ is the closure of some M-set;
\item If $B$ is the interior of some closed M-set;
\item If $ C_{int(clB)}(x) \leq C_B(x)$ $\forall x \in X$;
\item If $C_{int(T)}(x) \leq C_B(x) \leq C_T(x)$ for all $x \in X$, for some SCM-set $T$.
\end{itemize}

\begin{rem}
A SOM(SCM)-set need not be open(closed). It is open(closed) provided that the topology is either the discrete topology or Power set of whole subM-sets of M.
\end{rem}

\section{Semi Compactness in M-Topology}
 In this section, we introduce the concept of semi compactness in M-topology. The notion of compactness in an topological space is based on the open sets of the topology. So with the generalization of open sets as SOM-sets, the notion of compactness have been generalized as Semi-compactness in the present article.

\begin{defn} A collection $\{N_{\lambda}: \lambda \in \Lambda \}$ of M-sets is said to be a cover of an M-topological space $(M,\tau)$ if
\begin{center} $C_M(x) \leq C_{\underset{\lambda}{\cup}N_{\lambda}}(x)$ $\forall x \in X$.\end{center}
 Then we say $M$ is covered by $\{N_{\lambda}: \lambda \in \Lambda \}$.
\end{defn}

\begin{defn} If each $N_{\lambda}$ is SOM-set, then the open cover is said to be a semi open cover.
\end{defn}

\begin{defn} Any subcollection of a semi open cover is said to be semi subcover if it covers M.
\end{defn}

\begin{defn} Any subcollection of a semi open cover where each element is a whole subM-set is said to be semi whole subcover if it covers M.
\end{defn}

\begin{defn} An M-topological space $(M,\tau)$ is said to be a \textbf{semi compact space} if every semi open cover of M has a finite semi open subcover i.e. for any collection $\{N_{\lambda}: \lambda \in \Lambda \}$ of SOM-sets covering $M$, $\exists$ a finite subcollection $\{N_{\lambda_i}:i=1,2,....,n\}$ such that $C_M(x) \leq C_{\overset{n}{\underset{i=1}{\cup}}N_{\lambda_i}}(x)$, $\forall x \in X$.
\end{defn}

\begin{exm} 
Let $X = Z_+$, the set of positive integers, 

\begin{math} M = \begin{cases}2n/n  & if ~ n ~is~ even; \\ (2n+1)/n & n ~is ~odd. \end{cases} 
\end{math}
\\Then $\tau = \{M, \phi, \{4/2, 8/4, 12/6 , . . . \}, \{3/1, 7/3, 11/5, . . . \} \}$ is a M-topology. Now any semi open cover of $M$ must contain either $M$ or $\{4/2, 8/4, 12/6 , . . . \}$ and $\{3/1, 7/3, 11/5, . . . \}$ or all of them. In any case, the cover is finite and hence $M$ is semicompact.
\end{exm}

There are various types of subset relations in M-set theory. Instead of taking only M-subset relation, if we consider the whole/partial whole or full subset relations the notion of semi compactness can be generalized as follows:
\begin{defn} An M-topological space $(M,\tau)$ is said to be a \textbf{semi-whole compact space} if every semi open cover of M has a finite semi-whole subcover.\end{defn}

\begin{defn} An M-topological space $(M,\tau)$ is said to be a \textbf{semi-partial whole compact space} if every semi open cover of M has a finite semi- partial whole subcover.\end{defn}

\begin{defn} An M-topological space $(M,\tau)$ is said to be a \textbf{semi-full compact space} if every semi open cover of M has a finite semi-full subcover.\end{defn}

\begin{defn} An arbitrary collection ${\mathcal{C}}= \{O_1, O_2,...........\}$ of M-sets is said to have Finite intersection property(FIP) if intersection of elements of every finite subcollection $\{O_1, O_2,.........,O_n\}$ of $\mathcal{C}$ is non-empty, \\
i.e., $C_{\overset{n}{\underset{i=1}{\cap }} O_i}(x) \neq C_{\phi}(x)$.
\end{defn}

\begin{thm}
Let $(M, \tau)$ be an M-topology. $M$ is semi compact iff every collection $\mathbf{C} = \{ T_{\lambda} : \lambda \in \Lambda \}$ of SCM-sets in $M$ having the FIP is such that $C_{\underset {\lambda \in \Lambda}{\cap } T_\lambda}(x) \neq C_{\phi}(x)$.
\end{thm}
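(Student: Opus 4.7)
The plan is to mimic the classical proof that a space is compact iff every family of closed sets with the finite intersection property has nonempty intersection, translated into the multiset setting via the complement $N^c = M \ominus N$. The bridge is already in place: an earlier theorem in Section~3 states that $S$ is a SOM-set iff $S^c$ is a SCM-set, so complementation is a bijection between the two classes.

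Before starting either implication, I would record two easy facts about the $M$-complement restricted to subM-sets of $M$. First, on such subM-sets complementation is involutive, because if $C_N(x) \leq C_M(x)$ then $C_M(x) - (C_M(x) - C_N(x)) = C_N(x)$ with no truncation. Second, De Morgan holds: for any collection $\{A_\lambda\}$ of subM-sets of $M$, one has $(\bigcup_\lambda A_\lambda)^c = \bigcap_\lambda A_\lambda^c$ and $(\bigcap_\lambda A_\lambda)^c = \bigcup_\lambda A_\lambda^c$, by the pointwise identities $C_M - \max_\lambda C_{A_\lambda} = \min_\lambda (C_M - C_{A_\lambda})$ and $C_M - \min_\lambda C_{A_\lambda} = \max_\lambda (C_M - C_{A_\lambda})$. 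A small observation to note is that $\bigcap_\lambda T_\lambda = \phi$ is equivalent to $\bigcup_\lambda T_\lambda^c = M$ (both say $\min_\lambda C_{T_\lambda}(x) = 0$ for every $x$), which is what turns ``finite intersection empty'' into ``finite union covers $M$''.

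For the forward direction ($\Rightarrow$) I would argue by contrapositive. Let $\mathbf{C} = \{T_\lambda\}$ be a family of SCM-sets with FIP and suppose $\bigcap_\lambda T_\lambda = \phi$. Then $\{T_\lambda^c\}$ is a family of SOM-sets whose union equals $M$, i.e.\ a semi open cover. By semi compactness there is a finite subcover $\{T_{\lambda_1}^c, \ldots, T_{\lambda_n}^c\}$ with $\bigcup_{i=1}^n T_{\lambda_i}^c = M$. Complementing and invoking De Morgan gives $\bigcap_{i=1}^n T_{\lambda_i} = \phi$, contradicting FIP. Hence $\bigcap_\lambda T_\lambda \neq \phi$.

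For the converse ($\Leftarrow$) I would again proceed by contrapositive. Suppose $M$ is not semi compact, so there exists a semi open cover $\{N_\lambda\}$ with no finite subcover. Set $T_\lambda = N_\lambda^c$; each $T_\lambda$ is a SCM-set. Failure of every finite subcover says: for every finite $\{\lambda_1, \ldots, \lambda_n\}$, $\bigcup_{i=1}^n N_{\lambda_i} \neq M$, which by De Morgan gives $\bigcap_{i=1}^n T_{\lambda_i} \neq \phi$. Thus $\{T_\lambda\}$ has FIP, and since $\{N_\lambda\}$ covers $M$, $\bigcap_\lambda T_\lambda = \phi$, contradicting the hypothesis. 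The main obstacle throughout is purely bookkeeping: making sure that whenever $\ominus$ is applied, the arguments really are subM-sets of $M$ so that the truncation in the definition of $\ominus$ is inert and De Morgan holds cleanly; everything in a cover or subcollection of $\mathcal{P}^*(M)$ satisfies this, so the argument goes through without essentially new ideas beyond the classical one.
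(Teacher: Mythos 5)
Your proposal is correct and follows essentially the same route as the paper: complement the SCM-sets (resp. the semi open cover), use the equivalence between SOM-sets and their SCM complements together with De Morgan, and derive a contradiction with FIP in one direction and with the nonempty-intersection hypothesis in the other. Your explicit preliminary checks that $\ominus$-complementation is involutive and De Morgan holds for subM-sets of $M$ are details the paper leaves implicit, but they do not change the argument.
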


\begin{proof}

Let $M$ be semi compact and $\{ T_\lambda : \lambda \in \Lambda \}$ be collection of SCM-sets with FIP such that $C_{\underset {\lambda \in \Lambda}{\cap } T_\lambda}(x) = C_{\phi}(x) \Rightarrow C_{\underset {\lambda \in \Lambda}{\cup } T_{\lambda}^c}(x) = C_{M}(x) \Rightarrow \{ T_\lambda^c : \lambda \in \Lambda \}$ forms semi open cover of $M$. So $\exists, \{ T_{\lambda_i}^c : i = 1, 2, . . . , n \}$ such that $C_{\overset{n}{\underset{i=1}{\cup }} T_{\lambda_i}^c}(x) = C_{M}(x) \Rightarrow C_{\overset{n}{\underset{i=1}{\cap }} T_{\lambda_i}}(x) = C_{\phi}(x)$ for all $x \in X$, which is a contradiction.\\
Conversely, let every collection of SCM-sets having FIP be such that $C_{\underset {\lambda \in \Lambda}{\cap } T_\lambda}(x) \neq C_{\phi}(x)$. If possible let $M$ be not semi compact. So $\exists$ a semi open cover $\{ S_\lambda : \lambda \in \Lambda \}$ which has no finite subcover. Hence $C_M(x) > C_{\overset{n}{\underset{i=1}{\cup }} S_{\lambda_i}}(x) \Rightarrow C_{\phi}(x) \leq C_{\overset{n}{\underset{i=1}{\cap }} S_{\lambda_i}^c}(x)$, which is a contradiction.   
\end{proof}

\begin{thm}
Let $(M, \tau)$ be an M-topology. $M$ is semi compact iff every collection $\mathbf{C} = \{ N_{\lambda} : \lambda \in \Lambda \}$ of M-sets in $M$ having the FIP is such that $C_{\underset {\lambda \in \Lambda}{\cap } scl(N_\lambda)}(x) \neq C_{\phi}(x)$.
\end{thm}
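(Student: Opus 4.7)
The plan is to reduce this to the preceding FIP characterization (semi compact $\Leftrightarrow$ every family of SCM-sets with FIP has non-empty intersection) via two simple observations: (a) for any M-set $N$, its semi closure $scl(N)$ is itself a SCM-set (being an intersection of SCM-sets, which is a SCM-set by the earlier theorem on arbitrary intersections of SCM-sets), and (b) $C_N(x)\le C_{scl(N)}(x)$ always, with equality whenever $N$ is already a SCM-set (since then $N$ occurs among the SCM-sets being intersected to form $scl(N)$).

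For the forward direction, assume $M$ is semi compact and let $\{N_\lambda:\lambda\in\Lambda\}$ be a collection of M-sets with FIP. First I would form the collection $\{scl(N_\lambda):\lambda\in\Lambda\}$, which consists of SCM-sets by observation (a). Next I would check that this new collection still has FIP: for any finite subcollection, taking minima of counts preserves the inequality $C_{N_\lambda}(x)\le C_{scl(N_\lambda)}(x)$, so
\[
C_{\bigcap_{i=1}^n N_{\lambda_i}}(x)\le C_{\bigcap_{i=1}^n scl(N_{\lambda_i})}(x),
\]
and the left side is nonzero for some $x$ by hypothesis. Now applying the preceding theorem to the SCM-family $\{scl(N_\lambda)\}$ gives $C_{\bigcap_\lambda scl(N_\lambda)}(x)\neq C_\phi(x)$, as required.

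For the converse, assume the stated semi-closure FIP property and let $\{T_\lambda:\lambda\in\Lambda\}$ be an arbitrary collection of SCM-sets with FIP. By observation (b), $scl(T_\lambda)=T_\lambda$ for every $\lambda$, so
\[
C_{\bigcap_\lambda T_\lambda}(x)=C_{\bigcap_\lambda scl(T_\lambda)}(x)\neq C_\phi(x)
\]
by the assumption applied to $\{T_\lambda\}$ (which is a special family of M-sets with FIP). Then the preceding theorem yields that $M$ is semi compact.

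I expect no serious obstacle; the only point worth being careful about is observation (a), namely that $scl(N)$ is genuinely an SCM-set. This follows because $scl(N)$ is defined as the M-set intersection of all SCM-sets containing $N$, and the earlier theorem guarantees that arbitrary intersections of SCM-sets remain SCM-sets. Everything else is a count-wise manipulation that directly mirrors the proof of the preceding theorem.
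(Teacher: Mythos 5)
Your proposal is correct, but it takes a different route from the paper. The paper proves this theorem directly, essentially re-running the cover argument it used for the preceding SCM-set FIP characterization: in the forward direction it assumes $C_{\cap_\lambda scl(N_\lambda)}(x)=C_\phi(x)$, passes to the complements $(scl(N_\lambda))^c$ (which are SOM-sets) to get a semi open cover, extracts a finite subcover by semi compactness, and derives an empty finite intersection of the $N_{\lambda_i}$, contradicting FIP; in the converse it takes a semi open cover $\{S_\lambda\}$ with no finite subcover, observes that $\{S_\lambda^c\}$ is a family of SCM-sets with FIP whose total intersection is empty, and contradicts the hypothesis using that the semi closure of a SCM-set is itself. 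You instead reduce the statement to the preceding theorem (semi compact iff every SCM family with FIP has non-empty intersection) via two clean facts: $scl(N)$ is a SCM-set (an intersection of SCM-sets, which is legitimate since $M$ itself is a SCM-set containing $N$, and the earlier theorem on arbitrary intersections applies), and $C_N(x)\le C_{scl(N)}(x)$ with equality when $N$ is SCM. Your forward direction then follows because FIP is preserved when passing from $N_\lambda$ to $scl(N_\lambda)$, and your converse because any SCM family with FIP is a special case of the hypothesis. What your approach buys is economy and modularity: it uses the previous theorem as a black box and isolates the properties of $scl$ actually needed, avoiding a second contradiction-by-covers argument; what the paper's approach buys is self-containedness, working directly with covers and complements without needing to verify that $scl(N)$ is a SCM-set or that semi closure fixes SCM-sets (though it implicitly uses the latter fact in its converse).
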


\begin{proof}
Let $(M, \tau)$ be semi compact. If possible let $\mathbf{C} = \{ N_{\lambda} : \lambda \in \Lambda \}$ be a collection of M-sets in $M$ having the FIP be such that $C_{\underset {\lambda \in \Lambda}{\cap } scl(N_\lambda)}(x) = C_{\phi}(x) \Rightarrow C_{\underset {\lambda \in \Lambda}{\cup } (scl(N_\lambda))^c}(x) = C_{M}(x)$. Therefore $\{(scl(N_\lambda))^c : \lambda \in \Lambda\}$ forms a semi open cover of $M$. By semi compactness of $M$, $\exists$ a finite subcover $\{(scl(N_{\lambda_i}))^c : i = 1, 2, . . . , n \}$ such that $C_{\overset{n}{\underset{i=1}{\cup }} (scl(N_{\lambda_i}))^c}(x) = C_{M}(x) \Rightarrow C_{\overset{n}{\underset{i=1}{\cup }} N_{\lambda_i}}(x) \leq C_{\phi}(x)$, a contradiction.\\
Conversely, let the hypothesis and assume that $M$ is not compact. Then  $\exists$ a semi open cover $\{ S_\lambda : \lambda \in \Lambda \}$ which has no finite subcover. So $\exists$ a finite subcollection  $\{ S_{\lambda_i} : i=1,2,...,n \}$ such that, $C_{\overset{n}{\underset{i=1}{\cup }} S_{\lambda_i}}(x) < C_{M}(x) \Rightarrow C_{\overset{n}{\underset{i=1}{\cap }} S_{\lambda_i}^c}(x) \geq C_{\phi}(x)$. Therefore $\{S_{\lambda}^c : \lambda \in \Lambda\}$ is a family of M-sets with FIP. Now
$C_{\underset {\lambda \in \Lambda}{\cup } S_\lambda}(x) \geq C_{M}(x) \Rightarrow C_{\underset {\lambda \in \Lambda}{\cap } S_\lambda^c}(x) = C_{\phi}(x) \Rightarrow  C_{\underset {\lambda \in \Lambda}{\cap } (scl(S_{\lambda_i}))^c}(x) = C_{\phi}(x)$, a contradiction.
\end{proof}

\begin{rem}
The Theorems 4.11 and 4.12 hold good for semi whole/partial whole and full compact spaces.
\end{rem}

\begin{rem}
In general topology, a topological space is compact if every basic open cover (subbasic open cover) has a finite subcover. Considering M-topology, it is not necessarily true in context of semi open sets. Because, intersection of two SOM-sets is not necessarily a SOM-set and hence a collection of SOM-sets may not form a M-topology. 
\end{rem}

\begin{thm}
Let $(M, \tau)$ be an M-topology and $(N, \tau_N)$ be its subspace. Let $A$ be an M-set such that $C_A (x) \leq C_N (x) \leq C_M (x)$. Then A is $\tau$-semi compact iff A is $\tau_N$ semi compact.
\end{thm}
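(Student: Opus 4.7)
The plan is to mimic the classical proof that a subset is compact in the ambient space if and only if it is compact in the subspace topology, adapted to SOM-sets. The central technical bridge I would establish first is: for any M-set $V$ with $C_V(x) \leq C_N(x)$, $V$ is $\tau_N$-SOM iff $V = S \cap N$ for some $\tau$-SOM-set $S$. For the lifting direction, given an open witness $U = O \cap N$ of $V$ in $\tau_N$ (where $O \in \tau$), I set $S := V \cup O$; this is $\tau$-SOM with witness $O$, because $C_O \leq C_{V \cup O}$ is immediate while $C_{V \cup O} \leq C_{cl(O)}$ follows from $C_V \leq C_{cl_N(U)} \leq C_{cl(O)}$, and a short count-function check gives $S \cap N = U \cup V = V$.

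For the forward implication ($A$ is $\tau$-semi compact implies $A$ is $\tau_N$-semi compact), I take a $\tau_N$-SOM cover $\{V_\lambda\}$ of $A$, lift each $V_\lambda$ to a $\tau$-SOM-set $S_\lambda$ via the bridge so that $V_\lambda = S_\lambda \cap N$, and observe that $C_A \leq C_{\underset{\lambda}{\cup} V_\lambda} \leq C_{\underset{\lambda}{\cup} S_\lambda}$ makes $\{S_\lambda\}$ a $\tau$-SOM cover of $A$. Extracting a finite subcover $\{S_{\lambda_1}, \ldots, S_{\lambda_n}\}$ by $\tau$-semi compactness, the corresponding $V_{\lambda_i} = S_{\lambda_i} \cap N$ still cover $A$: the hypothesis $C_A \leq C_N$ combined with $C_A \leq C_{\cup S_{\lambda_i}}$ gives $C_A(x) \leq \min\{C_{\cup S_{\lambda_i}}(x), C_N(x)\} = C_{\cup V_{\lambda_i}}(x)$ at every $x$. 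Conversely, given a $\tau$-SOM cover $\{S_\lambda\}$ of $A$, I would form the candidate $\tau_N$-SOM cover $\{S_\lambda \cap N\}$ of $A$, apply $\tau_N$-semi compactness to extract a finite subcover, and pull back to the corresponding finite $\{S_{\lambda_i}\}$ using $S_{\lambda_i} \cap N \leq S_{\lambda_i}$ and $A \leq N$.

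The main obstacle lies in the backward direction: verifying that $S \cap N$ is $\tau_N$-SOM whenever $S$ is $\tau$-SOM, i.e., the \emph{push-down} half of the bridge. This reduces to the subspace-closure inequality $C_{cl(O) \cap N}(x) \leq C_{cl_N(O \cap N)}(x)$, which goes in the opposite direction of the usual relation $cl_N(A) = cl(A) \cap N$ and does not hold in classical topology without extra hypotheses. I would attempt it by unfolding the M-closure in terms of $\tau_N$-closed M-sets, which are precisely intersections with $N$ of $\tau$-closed M-sets, and checking count functions pointwise; if this fails in full generality, the argument must be recast through the finite-intersection-property characterization of Theorem 4.11 applied simultaneously in $(M,\tau)$ and $(N,\tau_N)$, thereby bypassing the subspace-closure subtlety entirely.
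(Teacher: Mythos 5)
Your overall route is the same as the paper's: pass between $\tau_N$-semi open covers and $\tau$-semi open covers of $A$ by intersecting with $N$, extract a finite subcover on the other side, and translate back using $C_A(x)\leq C_N(x)$. The difference is one of candour rather than strategy: the paper simply asserts both halves of your ``bridge''. In its forward direction it even writes each $\tau_N$-semi open $K_\lambda$ as $N\cap S_\lambda$ with $S_\lambda$ \emph{open} in $\tau$, which the definition of a $\tau_N$-SOM-set does not warrant; your lifting $S:=V\cup O$ together with the count check $S\cap N=V$ is a cleaner way to manufacture a $\tau$-SOM cover and improves on what is printed. In the converse direction the paper sets $G_\lambda=N\cap S_\lambda$ and declares $\{G_\lambda\}$ a $\tau_N$-semi open cover with no justification---exactly the push-down step you single out as the obstacle.

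That step is a genuine gap, and your proposal does not close it: you leave it conditional, and the suggested fallback through Theorem 4.11 does not bypass the difficulty, because applying the FIP criterion in $(N,\tau_N)$ requires the same translation between $\tau$-SCM-sets and $\tau_N$-SCM-sets (the complements of the very sets you are trying to compare). Moreover, even your lifting half has an unproved ingredient in the multiset setting: the inequality $C_{cl_N(U)}(x)\leq C_{cl(O)}(x)$ is inherited classically from $cl_N(U)=cl(U)\cap N$, but here $\tau_N$-closed sets are of the form $N\ominus(N\cap W)$ with $W\in\tau$, and on counts $N\ominus(N\cap W)$ need not coincide with $N\cap(M\ominus W)$ (for instance $C_N(x)=1$, $C_W(x)=2$, $C_M(x)=5$ gives $0$ versus $1$), so subspace-closed sets are not simply traces of $\tau$-closed sets and the containment of $cl_N(U)$ in $cl(O)$ needs a direct argument. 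In short: same skeleton as the paper, with the forward half essentially repairable by your construction, but the backward half (that the trace of a $\tau$-SOM-set is $\tau_N$-SOM, or some substitute for it) remains unproven in your proposal---and, it should be said, in the paper as well.
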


\begin{proof}
Let A be $\tau$-semi compact and $\{K_\lambda : \lambda \in \Lambda\}$ be a $\tau_N$ semi open cover of $A$. So $\exists$ open M-sets $S_\lambda \in \tau$ such that $C_{K_\lambda} (x) = C_{N \cap S_\lambda}(x) $ for each $\lambda,$ for all $x$.\\
Now $C_A (x) \leq C_N (x) \leq C_{\cup K_\lambda} (x) \leq C_{\cup S_\lambda}(x)$. Therefore $\{S_\lambda : \lambda \in \Lambda \}$ forms a $\tau$-semi open cover of $A$. So $\exists$ a finite sub cover $\{S_{\lambda_i} : i = 1, 2, . . . ,n\}$ such that $C_A (x) \leq C_N (x) \leq C_{\overset{n}{\underset{i=1}{\cup }} S_{\lambda_i}}(x)$. \\
But then $C_A (x) \leq C_N (x) \leq C_{N \cap (\overset{n}{\underset{i=1}{\cup }} S_{\lambda_i})}(x) = C_{ \overset{n}{\underset{i=1}{\cup }} (N \cap S_{\lambda_i})}(x) = C_{ \overset{n}{\underset{i=1}{\cup }} K_{\lambda_i}}(x) $.\\
This shows that $A$ is $\tau_N$ semi compact.\\
Conversely, let $\{S_\lambda\}$ be a $\tau$-semi open cover of $A$.\\ Setting  $C_{G_\lambda} (x) = C_{N \cap S_\lambda} (x)$ for all $x$ . \\
Then $C_A (x) \leq C_N (x) \leq C_{\cup S_\lambda} (x) \leq C_{N \cap (\cup S_\lambda)}(x)= C_{ \cup (N \cap S_\lambda)}(x) = C_{G_\lambda} (x) $.\\
So $\{G_\lambda : \lambda \in \Lambda \}$ is a $\tau_N$-semi open cover of $A$. By hypothesis, $\exists$ a finite subcollection $\{G_{\lambda_i} : i = 1, 2, . . . ,n\}$ such that $C_A (x) \leq  C_{\overset{n}{\underset{i=1}{\cup }} G_{\lambda_i}}(x) = C_{\overset{n}{\underset{i=1}{\cup }} (N \cap S_{\lambda_i})}(x) = C_{N \cap (\overset{n}{\underset{i=1}{\cup }}  S_{\lambda_i})}(x) \leq C_{\overset{n}{\underset{i=1}{\cup }}  S_{\lambda_i}}(x)$.\\
This implies $A$ is $\tau$-semi compact.
\end{proof}

\section{Conclusion}

In this paper, we have generalized open sets and closed sets in multiset topological space as SOM-sets and SCM-sets. With the help of which the notion of compactness is generalized as semi compactness in M-topology. The present article extend the theory of SOM-sets, SCM-sets and semi compactness of general sets to multisets. The study of various properties of these new sets shows that they deviate from classic theorems of topology in certain cases which are discussed in remarks. SOM-sets also have some extra properties which are listed in section 3. We also established a new characterization (Theorem 4.12 ) of semi compactness. \\
In future, one may consider generalization of separation axioms, countability axioms, connectedness and other topological properties in context of multiset theory.


\end{document}